\newtheorem{thm}{Theorem}[section]
\newtheorem{lemma}[thm]{Lemma}
\newtheorem{cor}[thm]{Corollary}
\newtheorem{prop}[thm]{Proposition}
\theoremstyle{definition}
\newtheorem{rem}[thm]{Remark}
\def\GA{{\mathfrak{A}}}
\def\Aut{\operatorname{Aut}\nolimits}
\def\BC{{\mathbf{C}}}
\def\BF{{\mathbf{F}}}
\def\BG{{\mathbf{G}}}
\def\Gal{\operatorname{Gal}\nolimits}
\def\GL{\operatorname{GL}\nolimits}
\def\tH{{\tilde{H}}}
\def\Hom{\operatorname{Hom}\nolimits}
\def\BL{{\mathbf{L}}}
\def\Out{\operatorname{Out}\nolimits}
\def\PSL{\operatorname{PSL}\nolimits}
\def\PSU{\operatorname{PSU}\nolimits}
\def\BQ{{\mathbf{Q}}}
\def\GS{{\mathfrak{S}}}
\def\BT{{\mathbf{T}}}
\def\BZ{{\mathbf{Z}}}
\def\iso{\buildrel \sim\over\to}
\def\ie{{\em i.e.}}
\title{Automizers as extended reflection groups}
\author{Rapha\"el Rouquier}
\address{Mathematical Institute,
University of Oxford, 24-29 St Giles', Oxford, OX1 3LB, UK}
\email{rouquier@maths.ox.ac.uk}
\begin{document}
\maketitle

\section{Introduction}

Brou\'e, Malle and Michel have shown that the automizer
of an abelian Sylow $p$-subgroup $P$ in a finite simple Chevalley group is an
irreducible complex reflection
group (for $p$ not too small and different from the defining characteristic)
\cite{BrMaMi,BrMi}.

The aim is this note is to show that a suitable version of this property
holds for general finite groups.

\smallskip
We give a simple direct proof, building on
the Lehrer-Springer theory \cite{LeSp}, that
the property above holds for simply connected simple
algebraic groups $G$, provided $p$ is not a torsion prime
(Proposition \ref{pr:reflChevalley}): the automizer
$E=N_G(P)/C_G(P)$ is a reflection group on $\Omega_1(P)$, the largest
elementary abelian subgroup of $P$.

On the other hand, we show that the presence of $p$-torsion in the
Schur multiplier of a finite group $G$ prevents the subgroup of $E$
generated by reflections from being irreducible (Proposition
\ref{pr:notirreducible}).

\smallskip
This suggests considering covering groups of finite simple groups or
equivalently finite simple groups $G$ such that $H^2(G,\BF_p)=0$.
We also need to allow $p'$-automorphisms and we now
look for a description of the automizer
as an extension of an irreducible reflection group $W$ by
a subgroup of $N_{\GL(\Omega_1(P))}(W)/W$.

We actually need a slight generalization:
$\Omega_1(P)$ should be viewed in some cases as a vector space over
a larger finite field (for example in the case of $\PSL_2(\BF_{p^n})$)
and we need to allow field automorphisms.

As an example, the automizer of an $11$-Sylow subgroup in the
Monster is
the $2$-dimensional complex reflection group $G_{16}$.

\smallskip
I thank Richard Lyons and Geoff Robinson for useful discussions.

\section{Notation and definitions}
Let $p$ be a prime.
Given $P$ an abelian group, we denote by $\Omega_1(P)$ the
subgroup of $P$ of elements of order $1$ or $p$, \ie, the largest elementary
abelian $p$-subgroup of $P$. 

Let $V$ be a free module of finite rank over a commutative algebra $K$. A 
{\em reflection} is an element $s\in\GL_K(V)$ of finite order such that
$V/\ker(s-1)$ is a free $K$-module of rank $1$ (note that we do not
require $s^2=1$). A finite subgroup of
$\GL_K(V)$ is a {\em reflection group} if it is generated by reflections.

\section{Main result and remarks}

Let $p$ be a prime and $H$ a simple group such that the $p$-part
of the Schur multiplier of $H$ is trivial, \ie\ $H^2(H,\BF_p)=0$.
Assume $H$ has an abelian Sylow $p$-subgroup $P$.
Let $\tH\le\Aut(H)$ be a finite group containing $H$ and such that
$\tH/H$ is a Hall $p'$-subgroup of $\Out(H)$. Let $E=N_\tH(P)/C_\tH(P)$.

\begin{thm}
\label{th:main}
There is 
\begin{itemize}
\item a finite field $K$
\item an $\BF_p$-subspace $V$ of $\Omega_1(P)$ and
 an isomorphism of $\BF_p$-vector spaces
$K\otimes_{\BF_p}V\iso \Omega_1(P)$ endowing $\Omega_1(P)$ with a structure
of $K$-vector space
\item a subgroup $N$ of $\GL_K\bigl(\Omega_1(P)\bigr)$ and
\item a subgroup $\Gamma$ of $\Aut(K)$
\end{itemize}
such that $E=N\rtimes\Gamma$, as subgroups of $\Aut\bigl(\Omega_1(P)\bigr)$, and
such that the normal subgroup $W$ of $N$ generated by reflections
acts irreducibly on $\Omega_1(P)$.
\end{thm}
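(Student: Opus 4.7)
The plan is to apply the classification of finite simple groups to split into manageable cases and then invoke the Brou\'e--Malle--Michel cyclotomic Sylow theory in the bulk case.

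First I would enumerate the simple groups $H$ satisfying the hypotheses. By CFSG the non-abelian ones split into: alternating groups $A_n$ with $p\le n<p^2$, excluding the small exceptions with $p$-torsion in the Schur multiplier, in which the normalizer acts on $P\cong \BF_p^{\lfloor n/p\rfloor}$ via an imprimitive reflection group in the family $G(p-1,1,\lfloor n/p\rfloor)$; sporadic groups, a short finite list per prime $p$ that can be checked from the Atlas in the spirit of the Monster/$G_{16}$ example given in the introduction; $H$ of Lie type in defining characteristic $p$ with abelian Sylow, essentially $\PSL_2(p^n)$ and close relatives, treated by taking $K=\BF_{p^n}$, $\dim_{\BF_p}V=1$, and identifying $\Omega_1(P)=K$ via the unipotent radical of a Borel; and finally $H$ of Lie type in cross characteristic, which is the substantive case.

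For $H=G^F/Z$ of Lie type in characteristic different from $p$, let $d$ be the order of $q$ modulo $p$. Then $P$ lies in a $\Phi_d$-torus $\BT_d$ of $G$, and I would invoke Proposition \ref{pr:reflChevalley} (or an isogeny-adapted variant) to identify the relative Weyl group $W_d=N_G(\BT_d)^F/\BT_d^F$ as an irreducible reflection group on the $\Phi_d$-part of the cocharacter lattice, whose mod-$p$ reduction is $\Omega_1(P)$. The field $K$ arises as the residue field of the appropriate cyclotomic extension and gives $\Omega_1(P)$ a $K$-structure on which $W_d$ acts $K$-linearly and irreducibly.

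To pass from $H$ to $\tH$ I would decompose the Hall $p'$-subgroup of $\Out(H)$ into diagonal, graph, and field constituents. Diagonal and graph automorphisms act $K$-linearly on $\Omega_1(P)$ and normalise $W_d$, together producing the subgroup $N$; field automorphisms act $K$-semilinearly and their images in $\Aut(K)$ give $\Gamma$. The semidirect decomposition $E=N\rtimes\Gamma$ then follows from Schur--Zassenhaus applied to the $p'$-extension $1\to N\to E\to\Gamma\to 1$, using that $\Gamma\le\Aut(K)$ is cyclic.

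The main obstacle I anticipate is separating the $K$-linear from the $K$-semilinear contributions of $\Out(H)$, particularly for the twisted groups (Suzuki, Ree, ${}^2\!A_n$, ${}^2\!D_n$, ${}^3\!D_4$) where the natural field of definition of $\BT_d$ does not match $\BF_q$ and the correct identification of $K$ requires care. A second subtlety is that the Schur multiplier hypothesis is essential for irreducibility of $W$ on $\Omega_1(P)$ --- the converse direction being Proposition \ref{pr:notirreducible} --- so the reduction modulo $p$ of the natural reflection representation of $W_d$ must be verified to remain faithful and irreducible, which is where the $p$-torsion structure of the reflection representation lattice enters case by case.
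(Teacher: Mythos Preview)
Your case split and the handling of alternating groups, sporadics, and $\PSL_2(p^n)$ match the paper. The difference lies in the cross-characteristic Chevalley case and the passage from $H$ to $\tH$. The paper's route is shorter than yours: it observes that one may take $K=\BF_p$ in every case except $\PSL_2(p^n)$ ($n>1$) and $J_1$, ${}^2G_2(q)$ with $p=2$. With $K=\BF_p$ one has $\Gamma=1$ and $N=E$ automatically, so the theorem collapses to the single assertion that the subgroup of $E$ generated by $\BF_p$-reflections is irreducible on $\Omega_1(P)$; for that it suffices to work inside $G=\BG^F$ with $\BG$ simply connected, which is exactly Proposition~\ref{pr:reflChevalley}. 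That proposition is proved directly over $\BF_p$ via Lehrer--Springer theory in positive characteristic, together with a Lang-theorem argument that $\dim V^F\ge\dim V^{wF}$ for all $w$ in the Weyl group (where $V=Y\otimes\BF_p$). There are no $\Phi_d$-tori, no cyclotomic residue fields, no decomposition of $\Out(H)$ into diagonal/graph/field constituents, and no Schur--Zassenhaus step. The two obstacles you anticipate --- separating $K$-linear from $K$-semilinear contributions, and verifying mod-$p$ irreducibility of a reflection lattice --- simply do not arise once $K=\BF_p$ is chosen. Your BMM/$\Phi_d$ framing could presumably be pushed through, but it introduces exactly the complications the paper's choice of $K$ avoids.

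One genuine omission in your list: ${}^2G_2(q)$ with $p=2$ is cross-characteristic but does \emph{not} fall under the Chevalley-group argument, because $2$ is a torsion prime for $G_2$. The paper treats it separately (alongside $J_1$) with $K=\BF_8$, $W=N=\BF_8^\times$, and $\Gamma=\Gal(\BF_8/\BF_2)$.
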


The theorem will be proven in \S \ref{se:proof}.

\begin{rem}
Gorenstein and Lyons have shown that $N_H(P)/C_H(P)$ acts irreducibly
on $\Omega_1(P)$ viewed as a vector space over $\BF_p$ and, as a consequence,
 $P$ is homocyclic \cite[(12.1)]{GoLy}.
Note nevertheless that the subgroup of $N_H(P)/C_H(P)$ generated by reflections
might not be irreducible
in its action on $\Omega_1(P)$: this happens for example in
the case $H=\GA_{2p}$, $p>3$.
\end{rem}

We can take $K=\BF_p$ in Theorem \ref{th:main}, except for
\begin{itemize}
\item $\PSL_2(p^n)$, $n>1$: $K=\BF_{p^n}$
\item $J_1$ and ${^2G}_2(q)$, $p=2$: $K=\BF_8$.
\end{itemize}
In those cases, $V=\BF_p$ and $P=\Omega_1(P)=K$.

\smallskip
Note that the theorem is trivial when $P$ is cyclic: one takes
$K=\BF_p$ and $N=E=W\subset \BF_p^\times$.

\medskip
Using the classification of finite simple groups, we deduce a statement
about finite groups with abelian Sylow $p$-subgroups.

\begin{cor}
\label{co:general}
Let $G$ be a finite group with an abelian Sylow $p$-subgroup $P$.
Let $H=O^{p'}(G/O_{p'}(G))$.

Assume the $p$-part of the Schur multiplier of $H$ is trivial. Then,
there is a finite group $X$ containing $H$ as a normal subgroup of
$p'$-index and
\begin{itemize}
\item a product $K$ of finite field extensions of $\BF_p$
\item an $\BF_p$-subspace $V$ of $\Omega_1(P)$ and
 an isomorphism of $\BF_p$-vector spaces
$K\otimes_{\BF_p}V\iso \Omega_1(P)$ endowing $\Omega_1(P)$ with a structure
of a free $K$-module
\item a subgroup $N$ of $\GL_K\bigl(\Omega_1(P)\bigr)$ and
\item a subgroup $\Gamma$ of $\Aut(K)$
\end{itemize}
such that $N_X(P)/C_X(P)=N\rtimes\Gamma$, as subgroups of
 $\Aut\bigl(\Omega_1(P)\bigr)$, and
such that denoting by $W$ the normal subgroup of $N$ generated by reflections,
we have $\Omega_1(P)^W=1$.
\end{cor}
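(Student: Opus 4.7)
The plan is to bootstrap Theorem~\ref{th:main} from simple groups to a direct product decomposition of $H$. As a preliminary, passing to $\bar G:=G/O_{p'}(G)$ identifies $P$ with a Sylow $p$-subgroup of $\bar G$ lying inside $H=O^{p'}(\bar G)$, and $O_{p'}(H)$, being characteristic in $H\trianglelefteq\bar G$, is contained in $O_{p'}(\bar G)=1$. Hence $H$ has trivial $p'$-core, and its generalized Fitting subgroup is $F^*(H)=O_p(H)E(H)$ with $E(H)$ a central product of quasisimple components whose Sylow $p$-subgroups are abelian.

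The structural heart of the argument is to combine CFSG with the hypothesis $H^2(H,\BF_p)=0$ to conclude that $H=H_1\times\cdots\times H_r$ is a direct product of non-abelian finite simple groups, each $H_i$ having abelian Sylow $p$-subgroup $P_i$ and trivial $p$-Schur multiplier. Abelian-Sylow simple groups are listed by CFSG, and each component of $E(H)$ is quasisimple with abelian Sylow $p$-subgroup; the vanishing of $H^2(H,\BF_p)$ then rules out a nontrivial $O_p(H)$ as well as central $p$-elements in the components (such contributions would survive to $H^2(H,\BF_p)$ under inflation), so that $H=E(H)$ is in fact a direct product of simple groups. A K\"unneth calculation, using that each $H_i$ is perfect and hence has $H^1(H_i,\BF_p)=0$, then transports the Schur multiplier hypothesis to each factor. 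I expect this to be the main obstacle, as the interaction between the cohomological hypothesis and the CFSG structure requires delicate bookkeeping.

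Once $H$ is a direct product, apply Theorem~\ref{th:main} to each $H_i$ with a choice of $\tilde H_i$, obtaining $(K_i,V_i,N_i,\Gamma_i,W_i)$. Let $\Sigma$ be the natural permutation group of isomorphic components of $H$ and take $X$ to be the preimage in $(\prod_i\tilde H_i)\rtimes\Sigma$ of a Hall $p'$-subgroup of the quotient by $H$; then $X$ contains $H$ as a normal subgroup of $p'$-index. Set $K=\prod_iK_i$, and write $\Omega_1(P)=\bigoplus_i\Omega_1(P_i)$ as a $K$-module via the idempotents; after grouping components of the same isomorphism type to arrange freeness, a subspace $V\subseteq\Omega_1(P)$ can be chosen so that $K\otimes_{\BF_p}V\iso\Omega_1(P)$ as $K$-modules. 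The subgroup $N\leq\GL_K(\Omega_1(P))$ is built from $\prod_iN_i$ together with the $K$-linear permutations of same-field factors induced by $X/H$, and $\Gamma\leq\Aut(K)$ from $\prod_i\Gamma_i$ together with the remaining permutations that mix Galois factors; a direct check yields $N_X(P)/C_X(P)=N\rtimes\Gamma$. Finally, the normal reflection subgroup $W$ of $N$ contains $\prod_iW_i$, and since each $W_i$ is a nontrivial irreducible reflection group on $\Omega_1(P_i)$ it has trivial fixed subspace; therefore $\Omega_1(P)^W\subseteq\bigoplus_i\Omega_1(P_i)^{W_i}=0$.
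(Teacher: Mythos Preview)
Your approach is essentially the paper's: decompose $H$ as a direct product $H_1\times\cdots\times H_r$ of simple groups via CFSG (the paper simply cites \cite[\S 5]{FoHa} rather than arguing through $F^*(H)$ and K\"unneth, and notes $O_p(H)=1$ from $H^2(H,\BF_p)=0$), apply Theorem~\ref{th:main} to each factor, and then take products $X=\prod_iX_i$, $K=\prod_iK_i$, etc. Your permutation group $\Sigma$ is an unnecessary addition---the corollary only asks for the existence of \emph{some} $X$, and the bare direct product already works---and your proposed split of permutations between $N$ and $\Gamma$ is not quite right anyway, since a nontrivial permutation of the idempotents of $K=\prod_iK_i$ is never $K$-linear and so cannot land in $N\le\GL_K\bigl(\Omega_1(P)\bigr)$.
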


\begin{proof}
The case where $H$ is simple is Theorem \ref{th:main}.
In general, the classification of finite simple groups shows that
there are finite simple groups $H_1,\ldots,H_r$ such that
$H=H_1\times\cdots\times H_r$ (cf. eg \cite[\S 5]{FoHa}).
Note that $O_p(H)=1$, \ie, there is no
non-trivial $p$-group as a direct factor of $H$, since $H^2(H,\BF_p)=0$.
Now, we take $X=X_1\times\cdots\times X_r$,
where the $X_i$ are associated with $H_i$. We put
$K=K_1\times\cdots\times K_r$, etc.
\end{proof}

\medskip
Following
\cite[Proof of (12.1)]{GoLy},
we give now the list of possible finite simple groups $H$ and primes $p$
such that Sylow $p$-subgroups of $H$ are abelian non-cyclic
and the $p$-part of the Schur multiplier of $H$ is trivial. In the first
case, instead of providing the group $H$, we provide a group $G$
such that $H\le G/O_{p'}(G)\le \Aut(H)$ and $p{\not|}\ [G/O_{p'}(G):H]$.

\begin{itemize}
\item $G=\BG^F$ where $\BG$ is a simply connected simple algebraic group
and $F$ is an endomorphism of $\BG$, a power of
which is a Frobenius endomorphism
defining a rational structure over a finite field with $q$ elements,
$p{\not|}q$ and $p$ is not a torsion prime for $\BG$
\item $H=\GA_n$ and $n<p^2$
\item $H=\PSL_2(p^n)$
\item $H={^2G}_2(q)$, $p=2$
\item $H$ is sporadic
\end{itemize}

Assume $K=\BF_p$. We have $V=\Omega_1(P)$ and $\Gamma=1$. Furthermore,
$N=E\subset N_{\GL(P)}(W)$. So, in this case, the theorem is equivalent
to the statement that $W$ acts irreducibly on $P$. As a consequence,
in order to show that the theorem holds, it is enough
to prove the statement with $\tH$
replaced by a group $G$ as above.

\begin{rem}
The finite simple groups with an abelian Sylow $p$-subgroup such that
the $p$-part of the Schur multiplier is non-trivial are the following
(cf \cite{Atl}):
\begin{itemize}
\item $H=M_{22},ON,\GA_6,\GA_7$ and $p=3$
\item $H=\PSL_2(q)$, $q\equiv 3,5 \pmod 8$ and $p=2$
\item $H=\PSL_3(q)$ and $3|q-1$ or $H=\PSU_3(q)$ and $3|q+1$ (here $p=3$)
\end{itemize}
Note that the automizer of a Sylow $3$-subgroup $P$ in $\Aut(ON)=ON.2$ does
not contain any reflection (when $P$ is viewed as a vector space over
$\BF_3$). That automizer is not a subgroup of $\GL_2(9).2$ (extension
by the Frobenius).
\end{rem}

Note that the presence of $p$-torsion in the Schur multiplier is an
obstruction to the irreducibility of
the subgroup of the automizer generated by reflections on
$\Omega_1(P)$, viewed as a vector space over $\BF_p$.

\begin{prop}
\label{pr:notirreducible}
Let $G$ be a finite group with an abelian Sylow $p$-subgroup $P$.
Let $E=N_G(P)/C_G(P)$ and let $W$ be the subgroup of $E$ generated by
reflections on $\Omega_1(P)$, viewed as an $\BF_p$-vector space.
Assume $p>2$.

If $H^2(G,\BF_p)\not=0$, then $\Omega_1(P)^W\not=0$.
\end{prop}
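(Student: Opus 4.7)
\emph{Plan.} Since $P$ is an abelian Sylow $p$-subgroup, $P$ is its own unique Sylow $p$-subgroup of $N_G(P)$, so $|E|$ is coprime to $p$. Cartan--Eilenberg's stable-elements theorem identifies $H^\ast(G,\BF_p)$ with $H^\ast(P,\BF_p)^E$ (fusion in $G$ is controlled by $N_G(P)$ when $P$ is abelian), so the hypothesis provides a nonzero $E$-invariant class in $H^2(P,\BF_p)$. For $p$ odd and $P$ abelian, the universal coefficient theorem yields a natural short exact sequence of $\BF_p E$-modules
\[
0 \to \operatorname{Ext}^1_\BZ(P,\BF_p) \to H^2(P,\BF_p) \to \Hom(\Lambda^2 P,\BF_p) \to 0,
\]
which splits by Maschke. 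Setting $U=P/pP$ and $V=\Omega_1(P)$, the outer terms identify respectively with $V^\ast$ (via $\xi\mapsto(v\mapsto p\tilde v)$ for any lift $\tilde v$ of $v$ in the extension $\xi$) and $\Lambda^2 U^\ast$, so at least one of $(V^\ast)^E$ and $(\Lambda^2 U^\ast)^E$ is nonzero.

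If $(V^\ast)^E\neq 0$, then Maschke together with the self-duality of the trivial representation gives $V^E\neq 0$, so $\Omega_1(P)^W\supseteq V^E\neq 0$.

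Assume now $(\Lambda^2 U^\ast)^E\neq 0$ and fix a nonzero $E$-invariant alternating form $\beta$ on $U$. Decomposing $P$ as a $\BZ_pE$-module into isotypic components and then into cyclic summands $\tilde\rho\otimes\BZ/p^k$ (which is valid since $|E|$ is coprime to $p$), both $U$ and $V$ become $\bigoplus\bar\rho$ with the same multiplicities, so $U\iso V$ as $\BF_p E$-modules and I transport $\beta$ to an $E$-invariant alternating form on $V$. Pick a Maschke complement $V=R\oplus R'$ with $R=\operatorname{rad}(\beta)$: then $\beta|_{R'}$ is nondegenerate and $R'\neq 0$. For any reflection $s\in W$, $|W|$ coprime to $p$ makes $s$ semisimple with eigenvalue $\zeta\neq 1$ on its root line $L_s$, and $\zeta-1\in\BF_p^\times$ since $p>2$. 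The splitting $V=R\oplus R'$ is $E$-equivariant, hence $s$-stable, so either $L_s\subseteq R$ (and $s$ fixes $R'$) or $L_s\subseteq R'$. In the latter case, $s|_{R'}$ is a reflection preserving $\beta|_{R'}$, and invariance of $\beta$ forces $\zeta\beta(L_s,H_s\cap R')=\beta(L_s,H_s\cap R')$, hence $\beta(L_s,H_s\cap R')=0$; combined with $\beta(L_s,L_s)=0$ and $R'=L_s\oplus(H_s\cap R')$, this gives $L_s\subseteq\operatorname{rad}(\beta|_{R'})=0$, a contradiction. So every reflection in $W$ fixes $R'$ pointwise, and $\Omega_1(P)^W\supseteq R'\neq 0$.

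The main obstacle is the second case: constructing the $\BF_p E$-module isomorphism $U\iso V$ so that $\beta$ lives on the space where the reflections actually act, and then ruling out reflections whose root line lies in the nondegenerate summand $R'$. The hypothesis $p>2$ enters precisely at the step where one needs $\zeta-1\in\BF_p^\times$ for a nontrivial eigenvalue $\zeta$ of a semisimple reflection.
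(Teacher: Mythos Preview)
Your proof is correct, and its overall architecture matches the paper's: reduce to $H^2(P,\BF_p)^E$ via control of fusion, identify $H^2(P,\BF_p)$ with $\Omega_1(P)^*\oplus\Lambda^2(\Omega_1(P)^*)$ as $\BF_pE$-modules, and treat the two summands separately.

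The genuine difference is in the second case. The paper dispatches it in one line by invoking Solomon's theorem, which for a reflection group $W$ on $V$ gives $(\Lambda^k V)^W\simeq\Lambda^k(V^W)$; taking $k=2$ forces $V^W\neq 0$ immediately. You instead argue directly: transport the $E$-invariant alternating form to $\Omega_1(P)$, split off the radical $E$-equivariantly, and show that any reflection whose root line landed in the nondegenerate part would lie in the radical of the restricted form---a contradiction. This is essentially a self-contained proof of the $k=2$ case of Solomon's theorem, trading a citation for a paragraph of linear algebra. Your route is more elementary and exhibits an explicit nonzero $W$-fixed subspace; the paper's is shorter and places the phenomenon in its natural invariant-theoretic context.

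Two small remarks. First, your careful justification of $U\simeq V$ as $\BF_pE$-modules (via the structure of $P$ over $\BZ_pE$ when $(|E|,p)=1$) is a point the paper silently absorbs into the assertion $H^2(P,\BF_p)\simeq V\oplus\Lambda^2 V$; you are being more scrupulous there. Second, your closing comment on where $p>2$ enters is slightly off: $\zeta\neq 1$ already gives $\zeta-1\in\BF_p^\times$ for any $p$. The actual role of $p>2$ is that for $p=2$ the group $E$ has odd order, so a semisimple reflection on an $\BF_2$-space would have eigenvalue in $\BF_2^\times=\{1\}$ on its root line, forcing $W=1$ and making the conclusion trivial.
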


\begin{proof}
Let $V=\Omega_1(P)^*$. We have
$H^2(G,\BF_p)\simeq H^2(N_G(P),\BF_p)\simeq H^2(P,\BF_p)^E$.
On the other hand, we have an isomorphism of $\BF_p E$-modules
$H^2(P,\BF_p)\iso V\oplus\Lambda^2(V)$, so
$H^2(G,\BF_p)\simeq V^E\oplus \Lambda^2(V)^E\subset V^W\oplus \Lambda^2(V)^W$.
By Solomon's Theorem \cite{So}, we have $\Lambda^2(V)^W\simeq \Lambda^2(V^W)$.
The result follows.
\end{proof}

\begin{rem}
Let $W$ be a reflection group on a complex vector space $L$, with minimal field
of definition $K$.
The subgroup of the outer automorphism group of $W$ of elements fixing
the set of reflections 
has always a decomposition as a semi-direct
product $(N_{\GL(L)}(W)/W)\rtimes \Gal(K/\BQ)$
as shown by Marin and Michel \cite{MaMi}.
\end{rem}

\begin{rem}
It would be interesting to investigate if there is a version of Theorem
\ref{th:main} for non-principal blocks with abelian defect groups.

In a work in progress, we study automizers of maximal elementary abelian
$p$-subgroups in covering groups of simple groups.
\end{rem}

\section{Proof of Theorem \ref{th:main}}
\label{se:proof}
We run through the list of groups $H$ (or $G$) as described above.

\subsection{Chevalley groups}

Let $\BG$ be a connected and simply connected reductive
algebraic group over an algebraic closure $k$ of a finite field
and endowed with an endomorphism $F$, a power of which is a Frobenius
endomorphism.
Let $G=\BG^F$.
Assume $p$ is invertible in $k$ and $p$ is not a torsion prime for $\BG$.

\subsubsection{Abelian $p$-subgroups}
Since $p$ is not a torsion prime for $\BG$,
every abelian $p$-subgroup $Q$ of $G$ is contained in an $F$-stable
maximal torus $\BT$ of $\BG$
and $\BL=C_{\BG}(Q)$ is a Levi subgroup
(\cite[Corollary 5.10 and Theorem 5.8]{SpSt} and \cite[Proposition 2.1]{GeHi}).
Furthermore, $N_{\BG}(Q)=N_G(Q)C_{\BG}(Q)$ \cite[Corollary 5.10]{SpSt}, hence
$N_{\BG}(Q)/C_{\BG}(Q)=N_G(Q)/C_G(Q)$.

\smallskip
Let $W=N_{\BG}(\BT)/\BT$, $X=\Hom(\BT,\BG_m)$ and $Y=\Hom(\BG_m,\BT)$.
If $\BG$ is simple, then the action of $W$ on $\BC\otimes_\BZ X$ is
irreducible.

We have a canonical map
$N_W(Q)\to N_{\BG}(Q)/\BT$. Since
$\BL\subset N_{\BG}(Q)\subset N_{\BG}(\BL)$, we obtain an isomorphism
$$N_W(Q)/C_W(Q)\iso N_{\BG}(Q)/C_{\BG}(Q).$$

\smallskip
Given $L$ an abelian group, we denote by $L_{p^\infty}$ the subgroup of
$p$-elements of $L$.
Let $\mu=k^\times$.
We have an isomorphism
$$\BT_{p^\infty}\iso \Hom(X,\mu_{p^\infty}),\
t\mapsto (\chi\mapsto \chi(t)).$$
This provides an isomorphism
$$\BT_{p^\infty}\iso Y\otimes_{\BZ}\mu_{p^\infty}.$$
These isomorphisms are equivariant for the actions of $W$ and $F$.

\subsubsection{Abelian Sylow $p$-subgroups}
Assume now $P=Q$ is a abelian Sylow $p$-subgroup of $G$.
Let $V=Y\otimes_\BZ\BF_p$. We have $V^F\simeq\Omega_1(P)$.

\begin{prop}
\label{pr:reflChevalley}
The group $N_W(P)/C_W(P)$ is a reflection group on $\Omega_1(P)$.
If $\BG$ is simple, then this reflection group is irreducible.
\end{prop}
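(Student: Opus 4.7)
The plan is to reduce Proposition \ref{pr:reflChevalley} to the Lehrer-Springer theorem for complex reflection groups via a mod-$p$ identification of eigenspaces.

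First I would fix an $F$-stable maximally split reference torus $\BT_0$ with cocharacter lattice $Y_0$, on which $F$ acts as $q\phi_0$ where $\phi_0$ is a finite-order automorphism preserving $W$. By Lang-Steinberg I would pick $g\in\BG$ with $\BT=g\BT_0g^{-1}$; then $g^{-1}F(g)$ represents an element $w\in W$, and the action of $F$ on $Y=Y(\BT)$ is transported back to $Y_0$ as $q\cdot(w\phi_0)$. Let $d$ be the multiplicative order of $q$ modulo $p$, and fix a primitive $d$-th root of unity $\zeta\in\BC$ reducing to $\bar{q}^{-1}$ modulo some prime $\mathfrak{p}$ of $\BZ[\zeta]$ above $p$. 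Under this identification,
$$\Omega_1(P)=\ker\bigl(w\phi_0-\bar{q}^{-1}\bigr)\ \text{acting on}\ Y_0\otimes\BF_p,$$
which is the mod-$\mathfrak{p}$ reduction of the $\zeta$-eigenspace $V_\zeta\subset Y_0\otimes\BC$ of $w\phi_0$. The Sylow hypothesis will force $V_\zeta$ to have maximal dimension among $\zeta'$-eigenspaces of elements of the coset $W\phi_0$ as $\zeta'$ varies over $d$-th roots of unity; \ie\ $w\phi_0$ is $\zeta$-regular in Springer's sense.

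Next I would apply the twisted Lehrer-Springer theorem: the quotient $N_W(V_\zeta)/C_W(V_\zeta)$ acts on $V_\zeta$ as a complex reflection group, and irreducibly whenever $W$ acts irreducibly on $Y_0\otimes\BC$, \ie\ whenever $\BG$ is simple. To transfer to characteristic $p$, I would use that $p$ is not a torsion prime for $\BG$, so each reflection of $W$ on $Y_0\otimes\BC$ restricts to a reflection on $Y_0\otimes\BF_p$; moreover an element of $W$ preserves $V_\zeta$ (resp.\ centralizes it) iff it preserves (resp.\ centralizes) the reduction $\Omega_1(P)$, since both are cut out by the same integral equations. Combined with the identification $N_W(P)/C_W(P)\cong N_W(\Omega_1(P))/C_W(\Omega_1(P))$ coming from the homocyclic structure of $P$ and the $\BZ$-linearity of $W$ on $Y$, this yields the proposition.

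The hard part will be the identification of $\Omega_1(P)$ with the mod-$\mathfrak{p}$ reduction of the integral eigenspace $V_\zeta$ together with the proof that $\dim V_\zeta$ is maximal. This is a genericity argument resting on the comparison of $|\BG^F|_p$ with $\Phi_d(q)^{\dim V_\zeta}$ up to $p'$-factors: the full Sylow hypothesis forces the $d$-eigenspace to attain its maximum, which is precisely the $\zeta$-regularity condition needed to drive Lehrer-Springer toward both the reflection and the irreducibility conclusions.
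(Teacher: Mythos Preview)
Your outline is a legitimate route, but it is not the paper's. The paper works entirely in characteristic $p$: it applies a positive-characteristic version of Lehrer--Springer (cited as \cite{Rou}) directly to the $\BF_pW$-module $V=Y\otimes_\BZ\BF_p$ together with the coset $WF$. The only input that theorem needs is the maximality $\dim V^F\ge\dim V^{wF}$ for all $w\in W$, and the paper proves this by a two-line Lang--Steinberg argument rather than your cyclotomic order-count: choose $x\in\BG$ with $x^{-1}F(x)=\dot w$, so that conjugation by $x$ carries $(\BT,wF)$ to $(x\BT x^{-1},F)$; then $V^{wF}\simeq\Omega_1\bigl((x\BT x^{-1})^F\bigr)$ is an elementary abelian $p$-subgroup of $G$ and hence has rank at most that of the Sylow $P$. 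No lift to $\BC$, no choice of $\zeta$ or $\mathfrak p$, no comparison of $|\BG^F|_p$ with powers of $\Phi_d(q)$.

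Your lift-and-reduce strategy is essentially the Brou\'e--Malle--Michel approach that the paper's argument is meant to shortcut. It can be completed, but the clause ``an element of $W$ preserves $V_\zeta$ iff it preserves the reduction $\Omega_1(P)$, since both are cut out by the same integral equations'' is precisely where the content hides and is not yet a proof: other eigenvalues of $w\phi_0$ can collide with $\zeta$ modulo $\mathfrak p$, so a priori $\dim_{\BF_p}\Omega_1(P)$ may exceed $\dim_\BC V_\zeta$ and $N_W(\Omega_1(P))$ may strictly contain $N_W(V_\zeta)$. You will need the non-torsion hypothesis on $p$ for more than just ``reflections stay reflections'' to close this. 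What your approach buys, once that is nailed down, is dependence only on the classical complex Lehrer--Springer theorem; what the paper's buys is that the entire proof occupies a few lines, at the cost of citing an in-preparation characteristic-$p$ extension.
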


\begin{proof}
Note that $N_W(P)/C_W(P)$ is a $p'$-group, since $P$ is an abelian
Sylow $p$-subgroup of $G$ and $N_W(P)/C_W(P)\simeq N_G(P)/C_G(P)$. So,
the canonical map is an isomorphism
$$N_W(P)/C_W(P)\iso N_W(\Omega_1(P))/C_W(\Omega_1(P)).$$
The proposition follows now from the next lemma by Lehrer-Springer theory
\cite{LeSp} extended to positive characteristic \cite{Rou}.
\end{proof}

\begin{lemma}
We have
$\dim V^F\ge \dim V^{wF}$ for all $w\in W$.
\end{lemma}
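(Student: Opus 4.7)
The strategy is to interpret $\dim V^{wF}$ as the $\BF_p$-rank of the finite abelian $p$-group $\BT_{p^\infty}^{wF}$, and then to exploit the fact that $P$ is an abelian Sylow $p$-subgroup of $G$ to bound this rank uniformly in $w$. For each $w\in W$, Lang's theorem produces $g\in\BG$ with $g^{-1}F(g)=\dot w$ for some lift $\dot w\in N_{\BG}(\BT)$ of $w$. Then $\BT_w:=g\BT g^{-1}$ is an $F$-stable maximal torus of $\BG$, and conjugation by $g$ gives an isomorphism of abstract groups $\BT^{wF}\iso\BT_w^F\subset G$. In particular $\BT_{p^\infty}^{wF}$ is isomorphic to the abelian $p$-subgroup $(\BT_w^F)_{p^\infty}$ of $G$.

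Since $P$ is an \emph{abelian} Sylow $p$-subgroup of $G$, every abelian $p$-subgroup of $G$ is $G$-conjugate to a subgroup of $P$. Hence the $\BF_p$-rank of $(\BT_w^F)_{p^\infty}$ (\ie, the $\BF_p$-dimension of its $p$-torsion) is at most the rank of $P$.

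To conclude, I would identify these two ranks with $\dim V^{wF}$ and $\dim V^F$ respectively. Reducing modulo $p$ the $\langle W,F\rangle$-equivariant isomorphism $\BT_{p^\infty}\iso Y\otimes_\BZ\mu_{p^\infty}$ gives a $\langle W,F\rangle$-equivariant identification $\BT_{p^\infty}[p]\iso V$, which upon taking $wF$-invariants yields $\BT_{p^\infty}^{wF}[p]\iso V^{wF}$ and hence an equality between the rank of $\BT_{p^\infty}^{wF}$ and $\dim V^{wF}$. Applied to $w=1$ this recovers the paper's earlier $V^F\simeq\Omega_1(P)$. Combining with the Sylow bound from the previous paragraph yields $\dim V^{wF}\le\dim V^F$.

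The only delicate point is the last identification: it requires that the $F$-action on $V=Y\otimes_\BZ\BF_p$ used in the statement of the lemma be the one inherited from $\mu_p$ under the reduction modulo $p$, rather than the naive action that is trivial on the $\BF_p$ factor. This is precisely the compatibility the paper has already built into its assertion $V^F\simeq\Omega_1(P)$, and the same computation applied to a twisted torus $\BT_w$ handles arbitrary $w\in W$ without further work.
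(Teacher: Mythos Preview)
Your proof is correct and follows essentially the same approach as the paper: Lang's theorem is used to pass from $wF$-fixed points on $\BT$ to $F$-fixed points on a twisted $F$-stable maximal torus $g\BT g^{-1}$, and then the rank of the resulting abelian $p$-subgroup of $G$ is bounded by the rank of the Sylow $p$-subgroup $P$. The paper phrases the identification $V^{wF}\simeq\Omega_1\bigl((g\BT g^{-1})^F\bigr)$ via the cocharacter lattice of the twisted torus, whereas you route it through $p$-torsion in $\BT_{p^\infty}$, but these are the same computation.
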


\begin{proof}
Let $\dot{w}\in N_\BG(\BT)$.
 By Lang's Lemma, there is $x\in\BG$ such that
$\dot{w}=x^{-1}F(x)$. Given $t\in\BT$, we have $F(xtx^{-1})=x\dot{w}F(t)
\dot{w}^{-1}$. So, $x\BT x^{-1}$ is $F$-stable and
the isomorphism
$$\BT\iso x\BT x^{-1},\ t\mapsto xtx^{-1}$$
transfers the action of $wF$ on the left to the action of $F$ on the right.
So,
$$V^{wF}\simeq \bigl(Y(x\BT x^{-1})\otimes\BF_p\bigr)^F\simeq
\Omega_1\bigl((x\BT x^{-1})^F\bigr).$$
The rank of that elementary abelian $p$-subgroup of $G$ is at most the rank
of $P$ and we are done.
\end{proof}

\subsection{Alternating groups}
Let $G=\GS_n$, $n>7$. Put $n=pr+s$ with $0\le s\le p-1$ and
$r<p$. We have $P\simeq (\BZ/p)^r$. We put $K=\BF_p$, 
$N=W=\BF_p^\times \wr \GS_r$.

\begin{rem}
Note that when $n=5$ and $p=2$ or $n=6,7$ and $p=3$,
the $p$-part of the Schur multiplier is not trivial but the description
above is still valid. Note though that when
$n=6$ and $p=3$, then $G$ contains $\GS_6$ as a subgroup of index
$2$. We have $K=\BF_3$, $P=K^2$, $N=E$, $W$ is
a Weyl group of type $B_2$ and $[N:W]=2$.
\end{rem}

\subsection{$\PSL_2$}
Assume $H=\PSL_2(K)$ for a finite field $K$ of characteristic $p$. We have
$W=N=K^\times$ and $\Gamma=\Gal(K/\BF_p)$.

\subsection{${^2G}_2(q)$}
Assume $H={^2G}_2(q)$ and $p=2$. We have
$K=\BF_8$, $W=N=K^\times$ and $\Gamma=\Gal(K/\BF_2)$.

\subsection{Sporadic groups}
We refer to \cite{BrMaRou} for the diagrams for complex reflection groups.
For sporadic groups, we have $P=\Omega_1(P)$.

$$\begin{array}{|c|c|c|c|c|c|c|}
\cline{1-7}
\tH & K & \dim_K(P) & W & N/W & \Gamma & \text{diagram of W} \\
\cline{1-7}
J_1 & \BF_8 & 1 & \BF_8^\times & 1 & \Gal(\BF_8/\BF_2) &
{\small \xy (0,0) *++={7} *\frm{o} \endxy} \\
\cline{1-7}
M_{11},M_{23},HS.2 & \BF_3 & 2 & B_2 & 2 & 1 &
\xy
(0,0) *\cir<6pt>{}="0",
(13,0) *\cir<6pt>{}="1",
"0";"1" **\dir2{-},
\endxy
\\
\cline{1-7}
J_2.2,Suz.2 & \BF_5 & 2 & G_2 & 2 & 1 & 
\xy
(0,0) *\cir<6pt>{}="0",
(13,0) *\cir<6pt>{}="1",
"0";"1" **\dir3{-},
\endxy
\\
\cline{1-7}
He.2,Fi_{22}.2,Fi_{23},Fi_{24} & \BF_5 & 2 & G_8 & 1 & 1 &
{\small \xy (0,0) *++={4} *\frm{o} ; (13,0) *++={4} *\frm{o} **@{-}
   \endxy} \\
\cline{1-7}
Co_1 & \BF_7 & 2 & G_5 & 1 & 1 &
{\small \xy (0,0) *++={3} *\frm{o} ; (13,0) *++={3} *\frm{o} **@{=} \endxy} \\
\cline{1-7}
Th, BM & \BF_7 & 2 & G_5 & 2 & 1 & 
{\small \xy (0,0) *++={3} *\frm{o} ; (13,0) *++={3} *\frm{o} **@{=} \endxy} \\
\cline{1-7}
M & \BF_{11} & 2 & G_{16} & 1 & 1 &
{\small \xy (0,0) *++={5} *\frm{o} ; (13,0) *++={5} *\frm{o} **@{-} \endxy} \\
\cline{1-7}
\end{array}$$


\begin{thebibliography}{BrMaRou}
\bibitem[Atl]{Atl} J.H.~Conway, R.T.~Curtis, S.P.~Norton, R.A.~Parker and
	R.A.~Wilson, ``Atlas of finite groups'',
	Oxford University Press, 1985.
\bibitem[BrMaMi]{BrMaMi} M.~Brou\'e, G.~Malle and J.~Michel,
	{\em Generic blocks of finite reductive groups},
	Ast\'erisque {\bf 212} (1993), 7--92.
\bibitem[BrMaRou]{BrMaRou} M.~Brou\'e, G.~Malle and R.~Rouquier,
        {\em Complex reflection groups, braid groups, Hecke algebras},
        J. reine angew. Math. {\bf 500} (1998), 127--190.

\bibitem[BrMi]{BrMi} M.~Brou\'e and J.~Michel,
	{\em Blocs \`a groupes de d\'efaut ab\'eliens des groupes r\'eductifs
	finis},
	Ast\'erisque {\bf 212} (1993), 93--117.
\bibitem[FoHa]{FoHa} P.~Fong and M.~Harris,
	{\em On perfect isometries and isotypies in finite groups},
	Invent. Math. {\bf 114} (1993), 139--191.
\bibitem[GeHi]{GeHi} M.~Geck and G.~Hi\ss,
	{\em Basic sets of Brauer characters of finite groups of Lie type},
	J. reine angew. Math. {\bf 418} (1991), 173--188.
\bibitem[GoLy]{GoLy} D.~Gorenstein and R.~Lyons,
	``The local structure of finite groups of characteristic $2$ type'',
	Memoirs of the American Math. Soc. {\bf 276}, 1983.
\bibitem[LeSp]{LeSp} G.I.~Lehrer and T.A.~Springer,
	{\em Reflection subquotients of unitary reflection groups},
	Canad. J. Math. {\bf 51} (1999), 1175-1193.
\bibitem[MaMi]{MaMi} I.~Marin and J.~Michel,
	{\em Automorphisms of complex reflection groups},
	Representation Theory, to appear, preprint arxiv:math/0701266.
\bibitem[Rou]{Rou} R.~Rouquier.
	{\em Relative reflection groups and braid groups}, in preparation.
\bibitem[So]{So} L.~Solomon,
	{\em Invariants of finite reflection groups},
	Nagoya J. of Math. {\bf 22} (1963), 57--64.
\bibitem[SpSt]{SpSt} T.A.~Springer and R.~Steinberg,
	{\em Conjugacy classes},
	in ``Seminar on algebraic groups and related finite groups'',
	Lectures Notes in Mathematics {\bf 131}, Springer Verlag, 1970.
\end{thebibliography}
\end{document}